\newcommand{\curl}{\mbox{curl}\,}
\newcommand{\p}{\partial}
\newcommand{\veps}{\varepsilon}
\newcommand{\lf}[1]{\dot{#1}}
\newcommand{\fo}[1]{\mathbb{#1}}
\newcommand{\tho}[1]{\mathsf{#1}}
\newtheorem{theorem}{Theorem}[section]
\newtheorem{corollary}{Corollary}[theorem]
\date{}
\begin{document}
\title{Existence, uniqueness, and long-time behavior of linearized field dislocation dynamics
}

\author{Amit Acharya\thanks{Department of Civil \& Environmental Engineering, and Center for Nonlinear Analysis, Carnegie Mellon University, Pittsburgh, PA 15213, email: acharyaamit@cmu.edu.} $\qquad$ Marshall Slemrod\thanks{Department of Mathematics, University of Wisconsin, Madison, WI 53706, email: slemrod@math.wisc.edu.}
}
\maketitle
\textit{This paper is dedicated to our friend Costas Dafermos on the occasion of his 80th birthday.}

\begin{abstract}
 \noindent This paper examines a system of partial differential equations describing dislocation dynamics in a crystalline solid. In particular we consider dynamics linearized about a state of zero stress and use linear semigroup theory to establish existence, uniqueness, and time asymptotic behavior of the linear system.
 
 \noindent Keywords: dislocations, small deformations, linear contraction semi-groups.
 
 \noindent AMS subject classifications: Primary: 35Q74, 37L05, 74E15: Secondary: 74H20, 74H25, 74H40, 74B20
\end{abstract}

\section{Introduction}
Dislocations are topological defects in elasticity and their dynamics and interaction are of significant scientific and technological interest. They were introduced in the theory of elasticity by Volterra in 1907 and continue to be vigorously studied to this day, with a complete dynamical theory within continuum mechanics of unrestricted material and geometric nonlinearity taking shape in recent years (see, e.g.,~\cite{acharya2022action,acharya2019structure,zhang2015single,arora2020unification,arora_acharya_ijss,arora_acharya_ijss} with detailed bibliographic threads to earlier works in the overall subject). Here, we give a mathematical analysis of the governing nonlinear pde system of dislocation mechanics, linearized about a state of vanishing stress.

This paper has two additional sections after this Introduction. Section \ref{sec:lin_disloc} presents the `small deformation' model of nonlinear dislocation mechanics, see e.g.~\cite{acharya2022action}. We then linearize this system about a state of vanishing stress to obtain a linear system of partial differential equations which describe the evolution of a class of deformations of an elastic solid. In Section \ref{sec:anal} we use the Lumer-Phillips Theorem \cite{yosida} to establish existence and uniqueness of solutions to our linear evolutionary system. Specifically, the Lumer-Phillips Theorem yields existence of a $C^0$ semigroup of contractions on a Hilbert space $H$. Furthermore, this evolution is dissipative and allows us to determine the time-asymptotic behavior via an application of a theorem of S. Foguel \cite{foguel1966ergodic}. Here, time asymptotic behavior is only given with respect to weak convergence in the Hilbert space $H$ as lack of a compact resolvent for the infinitesimal generator of our $C^0$ semigroup appears to preclude application of a theorem of Dafermos and Slemrod \cite{dafermos1973asymptotic} for strong convergence. Finally, two examples are presented which show that the limit dynamics predicted by the linearized \textit{dissipative} theory allows for both a non-trivial static solution as well as an oscillating motion.

\section{Linearized dislocation mechanics}\label{sec:lin_disloc}
We consider the following `small deformation' model of nonlinear dislocation mechanics:
\begin{equation}\label{eq:sys}
    \begin{aligned}
    \p_j v_i - \p_t u_{ij} - J_{ij} & = 0\\
    - \rho \, \p_t v_i + \p_j T_{ij} & = 0\\
    \mathbb{C}_{ijkl} u_{kl} & =  T_{ij} \\
    e_{jrs}\alpha_{ir} V_{s} & =  J_{ij} \\
    e_{rjk}\, \p_j u_{ik} & = \alpha_{ir} \\
    e_{smn} T_{pm} \alpha_{pn} & = V_s
    \end{aligned}
\end{equation}
where $v$ is the material particle velocity, $u = \veps + \omega$ is the elastic distortion, $\veps_{ij} = \frac{1}{2}(u_{ij} + u_{ji})$ is the elastic strain, $\omega_{ij} = \frac{1}{2}(u_{ij} - u_{ji})$ is the elastic rotation, $J$ is the dislocation flux or plastic strain rate, $\rho$ is the mass density, $T$ is the stress, $\mathbb{C}$ is the tensor of elastic moduli with major and minor symmetries, $\alpha$ is the dislocation density tensor, and $V$ is the dislocation velocity vector.

 For any $\alpha$ field that is the $\curl$ of a skew symmetric tensor field, and with $0$-traction boundary conditions (i.e., $T_{ij} n_j = 0$, where $n_j$ is the outward unit normal field on the body), the unique solution (in a simply connected body) for stress $T_{ij} = \fo{C}_{ijkl} u_{kl} = \fo{C}_{ijkl} \veps_{kl}$ corresponding to the system
\begin{equation*}
    \begin{aligned}
       \p_j (\fo{C}_{ijkl} u_{kl}) &= 0\\
       e_{rjk}\p_j u_{ik} &= \alpha_{ir}
    \end{aligned}
\end{equation*}
 is $T = 0$ with $\veps = 0$. Thus, for the entire class of such dislocation density distributions, $J = 0, V = 0$, and the class $\left\{(v_i, u_{ij}) | v_i = 0, u_{ij} = \frac{1}{2}(u_{ij} - u_{ij}), \,  \mbox{i.e., skew}\right\}$ constitutes steady state solutions of \eqref{eq:sys}. Motivated by the above, we linearize \eqref{eq:sys} about any state $(v_i, u_{ij})$ with $(v_i = 0, \veps_{ij} = 0)$, with the field  $\alpha$ defined from \eqref{eq:sys}$_5$, regardless of the topology of the body. Such 0-stress steady state solutions represent (within the confines of the `small deformation' theory \eqref{eq:sys} being considered) non-trivial elastic distortion and dislocation density fields (e.g.~arbitrarily fine distributions of dislocation walls) - unlike in classical linear elasticity.  

\textit{Henceforth, we will assume $T = 0, J = 0$, and $V = 0$}.

We denote the dependent fields of the linearized system with overhead dots `$\dot{(\  )}$' to obtain the system
\begin{equation}
\label{eq:lin_sys}
    \begin{aligned}
    \p_j \lf{v}_i - \p_j \lf{u}_{ij} - \lf{J}_{ij} & = 0 \\
    - \rho \p_t \lf{v}_i + \p_j ( \mathbb{C}_{ijkl} \lf{\veps}_{kl} ) & = 0\\
    \mathbb{C}_{ijkl} \lf{\veps}_{kl} & = \lf{T}_{ij}\\
    e_{jrs} \alpha_{ir} \lf{V}_s  & = \lf{J}_{ij}\\
    e_{rjk} \p_j \lf{u}_{ik} & = \lf{\alpha}_{ir}\\
     e_{smn} \lf{T}_{pm} \alpha_{pn} & = \lf{V}_s.
    \end{aligned}
\end{equation}
We then have the following energy equality for the linearized system:
\begin{equation}
\label{eq:en_eq}
    \begin{aligned}
    & \mathbb{C}_{ijkl} \lf{\veps}_{kl} \left( \p_j \lf{v}_ i - \p_j \lf{u}_{ij} - \lf{J}_{ij} \right) = 0,\\
    & - \rho \lf{v}_i \p_t \lf{v}_i + \lf{v}_i \p_j ( \mathbb{C}_{ijkl} \lf{\veps}_{kl} ) = 0\\
    & \Longrightarrow \p_j \left( \lf{v}_i \mathbb{C}_{ijkl} \lf{\veps}_{kl} \right) - \p_t \left( \frac{1}{2} \rho \lf{v}_i \lf{v}_i + \frac{1}{2} \mathbb{C}_{ijkl} \lf{\veps}_{ij} \lf{\veps}_{kl} \right)  =  \mathbb{C}_{ijkl} \lf{\veps}_{kl} \lf{J}_{ij}  =  \lf{T}_{ij} \lf{J}_{ij}.
    \end{aligned}
\end{equation}
From \eqref{eq:lin_sys}$_{4,6}$ we have
\begin{equation}\label{eq:diss_sign}
    \begin{aligned}
    \lf{V}_s & = e_{sjr} \lf{T}_{ij} \alpha_{ir} \\
    \lf{T}_{ij} \lf{J}_{ij} & =  \lf{T}_{ij} e_{sjr} \alpha_{ir} \lf{V}_s = \lf{V}_s \lf{V}_s \geq 0,
    \end{aligned}
\end{equation}
which yields energy dissipation and will be crucial to our analysis.
Introducing the notation
\begin{equation}\label{eq:lin_J}
    \begin{aligned}
    \lf{J}_{ij} & = e_{sjr} \alpha_{ir} e_{smn} \lf{T}_{pm} \alpha_{pn} = (e_{sjr} \alpha_{ir} e_{smn}\alpha_{pn}) \mathbb{C}_{pmkl} \lf{u}_{kl} = \fo{B}_{ijkl} \lf{u}_{kl} = \fo{B}_{ijkl} \lf{\veps}_{kl}
    \end{aligned}
\end{equation}
\eqref{eq:lin_sys} becomes
\begin{equation}\label{eq:gov_eqn}
    \begin{aligned}
    \p_t \lf{u}_{ij} - \p_j \lf{v}_i + \fo{B}_{ijkl} \lf{u}_{kl} & = 0\\
    \rho \p_t \lf{v}_i - \p_j (\fo{C}_{ijkl} \lf{u}_{kl} ) & = 0,
    \end{aligned}
\end{equation}
with `nonnegative dissipation' \eqref{eq:diss_sign} in force.
\section{Analysis of linearized dislocation mechanics}\label{sec:anal}
For the analysis of system \eqref{eq:lin_sys} it is useful to decompose it into symmetric and skew parts as follows
\begin{equation}\label{eq:lin_sym_sys}
    \begin{aligned}
         \p_t \lf{\veps}_{ij} & = \frac{1}{2} (\p_j \lf{v}_i + \p_i \lf{v}_j) - \lf{J}_{(ij)}\\
         \rho \p_t \lf{v}_i & = \p_j \fo{C}_{ijkl} \lf{\veps}_{kl} \\
         \p_t \lf{\omega}_{ij} & = \frac{1}{2} (\p_j \lf{v}_i - \p_i \lf{v}_j) - \lf{J}_{[ij]}.
    \end{aligned}
\end{equation}
Here, and in the following, we use the index notation $A_{\ldots(ij)\ldots} = \frac{1}{2} ( A_{\ldots ij \dots} + A_{\ldots ji \ldots})$ and $A_{\ldots [ij] \dots} = \frac{1}{2} ( A_{\ldots ij \ldots} - A_{\ldots ji \ldots})$.

We note from \eqref{eq:lin_J} that $\fo{B}_{ijkl} = \fo{B}_{ij(kl)}$, a property inherited from the minor symmetries of the elastic moduli $\fo{C}$, and
\[
\lf{J}_{ij} = \fo{B}_{ij(kl)}\lf{\veps}_{kl}.
\]
\subsection{Existence and Uniqueness}
Let $\Omega$ be an open, bounded, connected set in $\mathbb{R}^3$. By $\p \Omega$ we denote the boundary of $\Omega$ which is $C^1$-smooth and by $n_j, j = 1,2,3$ the unit normal on $\p \Omega$ pointing towards the exterior. We denote by $\p \Omega_1, \p \Omega_2$ fixed subsets of $\p \Omega$, and $\p \Omega_1^c := \p \Omega - \overline{\p \Omega_1}$, $\p \Omega_2^c := \p \Omega - \overline{\p \Omega_2}$.

Introduce the Sobolev spaces
\begin{equation*}
    \begin{aligned}
    L^2(\Omega) & = \left\{ v:\Omega \to \fo{R}^3   \,\vert \, \int_\Omega dx \, v_i v_i < \infty \right\}\\
    H^0(\Omega) & = \left\{\veps:\Omega \to \fo{R}^{3 \times 3}_{sym}   \, | \, \int_\Omega dx \, \veps_{ij} \veps_{ij}  < \infty \right\} \ \fo{R}^{3 \times 3}_{sym}\ \mbox{is the set of $3 \times 3$ symmetric matrices}\\
    H^1(\Omega) & = \left\{v:\Omega \to \fo{R}^3  \, | \, \p_j v_i \, \mbox{in the sense of distributions}, \, \int_\Omega dx \, \p_j v_i \p_j v_i + v_i v_i < \infty \right\},
    \end{aligned}
\end{equation*}
and we insist on
\begin{equation*}
    \begin{aligned}
    \fo{C}_{ijkl} & = \fo{C}_{jikl} = \fo{C}_{ijlk} = \fo{C}_{klij} \\
    \fo{C}_{ijkl} \veps_{ij}\veps_{kl} & \geq a_0 \,\veps_{ij} \veps_{kl}, \forall \veps, a_0 > 0 \  \mbox{ a constant}. 
    \end{aligned}
\end{equation*}
Let 
\[
U = (\lf{\veps}_{ij},\lf{v}_i)
\]
so that \eqref{eq:lin_sys} is written as
\begin{equation}\label{eq:sys_not}
    \begin{aligned}
    \p_t U & = A\, U\\
    A\, U & = \begin{bmatrix}
    \p_{(j} \lf{v}_{i)} - \fo{B}_{(ij)(kl)} \lf{\veps}_{kl}\\
    \\
    \frac{1}{\rho} \p_j (\fo{C}_{ijkl} \lf{\veps}_{kl})\\
    \end{bmatrix}.
    \end{aligned}
\end{equation}
We wish to establish that $A$ is the infinitesimal generator of a $C^0$ semi-group of contractions $S(t)$ on the Hilbert space
\begin{equation*}
    H = H^0(\Omega) \times L^2(\Omega)
\end{equation*}
with inner product $\langle \ , \ \rangle_H$ defined as
\begin{equation}\label{eq:diss_func}
    \begin{aligned}
    \langle (\veps_{ij}, v_i), (\bar{\veps}_{ij}, \bar{v}_i) \rangle_H & = \langle \fo{C}_{ijkl} \veps_{kl}, \bar{\veps}_{ij} \rangle_{H^0(\Omega)} + \langle \rho v_i, \bar{v}_i \rangle_{L^2(\Omega)}\\
    D(A) & = \{ (\veps_{ij}, v_i) \in H | \, AU \in H, v_i(x) = 0, \mbox{a.e.}\, x \in \p \Omega_1, \fo{C}_{ijkl} \veps_{kl} n_j = 0 \  a.e.~x \in \p \Omega_2 \}\\
    & = \left\{(\veps_{ij}, v_i) \in H | \, v_i \in \widehat{H}^1(\Omega), \p_j (\fo{C}_{ijkl} \veps_{kl}) \in L^2(\Omega), \fo{C}_{ijkl} \veps_{kl} n_j = 0 \  a.e.~ x\in \p \Omega_2 \right\}\\
    \widehat{H}^1(\Omega) &:= \left\{ v_i \in H^1(\Omega) | v_i = 0 \ a.e~ x\in \p \Omega_1 \right\}. 
    \end{aligned}
\end{equation}

We note that $\overline{D(A)} = H$. Furthermore, we have $\langle U, AU \rangle_H = \langle AU, U\rangle_H \leq 0$ for all $U \in D(A)$, i.e., $A$ is \emph{dissipative}. To see this, we compute
\begin{equation*}
    \begin{aligned}
    \langle U, AU \rangle_H & = \int_\Omega dx \,  \fo{C}_{klij} \lf{\veps}_{ij} (\p_{(l} \lf{v}_{k)} - \fo{B}_{(kl)(pm)} \lf{\veps}_{pm}) + \lf{v}_i \p_j (\fo{C}_{ijkl} \lf{\veps}_{kl}) \\
    & = \int_{\p \Omega} da \, \lf{v}_i \fo{C}_{ijkl} \lf{\veps}_{kl} n_j - \int_\Omega dx \, \fo{C}_{klij} \lf{\veps}_{ij} \fo{B}_{(kl)(pm)} \lf{\veps}_{pm} \\
    & = - \int_\Omega dx \, \lf{T}_{kl} \lf{J}_{kl} = - \int_\Omega dx \, \lf{V}_s \lf{V}_s \leq 0.
    \end{aligned}
\end{equation*}
By the Lumer-Phillips Theorem \cite[p.~250]{yosida} $A$ will then be the infinitesimal generator of a $C^0$ semigroup of contractions on $H$ if the range condition
\begin{equation*}
    \mbox{range}(\lambda I - A) = H,
\end{equation*}
for some $\lambda \in \mathbb{R}$, i.e., we need solvability of the system
\begin{equation}\label{eq:solvability}
    \begin{aligned}
    \lambda \lf{\veps}_{ij} - \p_{(j} \lf{v}_{i)} + \fo{B}_{(ij)(kl)} \lf{\veps}_{kl} & = f_{ij} \\
     \lambda \rho \lf{v}_i - \p_j ( \fo{C}_{ijkl} \lf{\veps}_{kl} ) & = g_i,
    \end{aligned}
\end{equation}
for given $(f,g) \in H$.

We ensure this by writing for $\lambda$ sufficiently large
\[
\lf{\veps}_{ij} = \fo{G}_{ijkl} (\p_l \lf{v}_k + f_{kl} )
\]
where $\fo{G}_{ijkl}$ is the inverse of $(\lambda \delta_{ik} \delta_{jl} + \fo{B}_{(ij)(kl)})$, or more succintly, with $\fo{D}_{ijkl} := \fo{B}_{(ij)(kl)}$, $\fo{G} = \mathcal{R}(- \lambda, \fo{D})$ where $\mathcal{R}(\lambda, \fo{D})$ denotes the resolvent of $\fo{D}$.

The usual expansion of the resolvent (see, e.g., \cite[p.~211]{yosida}) gives
\[
\lambda \mathcal{R}(-\lambda, \fo{D}) = \fo{I} + \fo{D} \lambda^{-1} \sum_{j = 0}^\infty (-\lambda)^{-j} \fo{D}^j.
\]
Hence for $r = \frac{\| \fo{D} \|}{\lambda} < 1$, where $\| \fo{D} \|$ denotes the operator norm of $\fo{B}$, the series converges and we define
\begin{equation}
  \begin{aligned}\label{eq:def_K}
  \fo{K} & = \fo{D} \lambda^{-1} \sum_{j = 0}^\infty (-\lambda)^{-j} \fo{D}^j\\
  \| \fo{K} \| & = r(1-r)
  \end{aligned}
\end{equation}
to get
\[
\lambda \fo{G} = \fo{I} + \fo{K} \Longrightarrow \fo{G} = \lambda^{-1} \fo{I} + \lambda^{-1} \fo{K},
\]
where $\| \lambda^{-1} \fo{K} \| \leq \mbox{const}\, \lambda^{-2}$. Hence $\fo{G}_{ijkl} = \lambda^{-1} \delta_{ik} \delta_{jl} + \lambda^{-1} \fo{K}_{ijkl}$, and
\begin{equation}\label{eq:u_invert}
    \lf{\veps}_{ij} = \lambda^{-1} (\p_{(j} \lf{v}_{i)} + f_{ij}) + \lambda^{-1} \fo{K}_{ijkl} (\p_{(l} \lf{v}_{k)} + f_{kl})
\end{equation}
Now substitute \eqref{eq:u_invert} into \eqref{eq:solvability}  to obtain
\begin{equation}\label{eq:strong}
\begin{aligned}
   & \lambda^2 v_i - \p_j (\fo{C}_{ijkl} (\p_l \lf{v}_k + f_{kl})) - \p_j (\fo{C}_{ijkl} \fo{K}_{klmn} (\p_n \lf{v}_m + f_{mn}))  = \lambda g_i \\
    & \Longrightarrow \lambda^2 v_i - \p_j (\fo{C}_{ijkl} \p_l \lf{v}_k) - \p_j (\fo{C}_{ijkl}\fo{K}_{klmn} \p_n \lf{v}_m) = \p_j (\fo{C}_{ijkl} f_{kl}) + \p_j (\fo{C}_{ijkl}\fo{K}_{klmn} f_{mn})+ \lambda g_i
\end{aligned}
\end{equation}
Write \eqref{eq:strong} in weak form
\begin{equation}\label{eq:weak_form}
\begin{aligned}
     \int_\Omega dx \, \lambda^2 \overline{v}_i \lf{v}_i + \p_j\overline{v}_i  \fo{C}_{ijkl} \p_l \lf{v}_k & + \p_j \overline{v}_i (\fo{C}_{ijkl} \fo{K}_{klmn}) \p_n \lf{v}_m  \\
    & =  \int_\Omega dx \, - \p_j  \overline{v}_i \fo{C}_{ijkl} f_{kl} - \p_j \overline{v}_i (\fo{C}_{ijkl}\fo{K}_{klmn} ) f_{mn} + \lambda \overline{v}_i g_i \\
     & \qquad \forall \ \overline{v}_i \in \widehat{H}^1(\Omega).
\end{aligned}
\end{equation}
The left hand side of \eqref{eq:weak_form} defines a bilinear form $\mathcal{B}(v, \overline{v})$ which satisfies
\begin{enumerate}
    \item $\mathcal{B}(v, \overline{v}) \leq \mbox{const} \| v\|_{\widehat{H}^1(\Omega)}\|\overline{v}\|_{\widehat{H}^1(\Omega)}$  (boundedness) 
    \item $\mathcal{B}(v, v) \geq \mbox{const} \| v\|^2_{\widehat{H}^1(\Omega)}$ for $\lambda$ sufficiently large by \eqref{eq:def_K} (coercivity).
\end{enumerate}
Here we have assumed that $\fo{C}_{ijkl}, \alpha_{ij}$ are continuous on $\Omega$ and used Korn's inequality (see, e.g, Ciarlet \cite{ciarlet2010korn}).

As the right hand side of \eqref{eq:weak_form} defines a bounded linear functional on $\widehat{H}^1(\Omega)$, the Lax-Milgram Theorem \cite[p.~92]{yosida} yields a unique solution $\lf{v}_i \in \widehat{H}^1(\Omega)$ to the weak form \eqref{eq:weak_form}. Substituting this $\lf{v}_i$ into \eqref{eq:u_invert} we have defined $\lf{\veps}_{ij}$ where the pair
\[
(\lf{\veps}_{ij}, \lf{v}_i) \in H^0(\Omega) \times \widehat{H}^1(\Omega)
\]
solves \eqref{eq:solvability}. From \eqref{eq:solvability} we see that $\p_j (\fo{C}_{ijkl} \lf{\veps}_{kl}) \in L^2(\Omega)$. Furthermore, retracing our steps from \eqref{eq:weak_form} back to \eqref{eq:solvability} tells us that 
\begin{equation*}
    \int_{\p \Omega_2} \overline{v}_i  \fo{C}_{ijkl} \lf{\veps}_{kl}\, n_j \, da = 0
\end{equation*}
and hence $(\lf{\veps}_{ij}, \lf{v}_i) \in D(A)$.
Thus, the range condition of the Lumer-Phillips theorem is satisfied and we we have
\begin{theorem}
$A$ is the infinitesimal generator of a $C^0$ semi-group of contractions $S(t)$ on $H$and for $U_0 \in H$, $S(t)U_0$ provides the unique weak solution to \eqref{eq:lin_sym_sys}$_{1,2}$.
\end{theorem}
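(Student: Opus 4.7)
The plan is to apply the Lumer-Phillips theorem \cite[p.~250]{yosida}, which requires three ingredients: density of $D(A)$ in $H$, dissipativity of $A$, and the range condition $\mbox{range}(\lambda I - A) = H$ for some $\lambda > 0$. Once the $C^0$ contraction semigroup $S(t)$ is produced on $H$, the weak-solution statement for arbitrary $U_0 \in H$ is obtained in the usual semigroup-theoretic fashion: for $U_0 \in D(A)$, $t \mapsto S(t)U_0$ is a strong solution of $\p_t U = A U$; for general $U_0 \in H$, approximation by a sequence in $D(A)$ combined with the contraction estimate yields convergence in $H$ uniformly on compact time intervals, and passing to the limit in the variational pairing of \eqref{eq:lin_sym_sys}$_{1,2}$ against test fields gives a weak solution, with uniqueness following directly from contractivity.

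The density $\overline{D(A)} = H$ is straightforward from the fact that smooth fields compactly supported in $\Omega$ lie in $D(A)$ and are dense in $H^0(\Omega) \times L^2(\Omega)$. The dissipativity computation $\langle U, AU\rangle_H \leq 0$ has already been performed in the excerpt: integration by parts against the boundary conditions defining $D(A)$ eliminates the surface term, and the remaining volume term collapses to $-\int_\Omega \lf{V}_s \lf{V}_s\, dx \leq 0$ by \eqref{eq:diss_sign}. Neither step requires further work.

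The substantive part is the range condition, i.e.~the solvability of \eqref{eq:solvability} for $(f,g) \in H$. Following the excerpt, I would first invert the algebraic relation for $\lf{\veps}$ in the first equation by means of the resolvent $\fo{G} = \mathcal{R}(-\lambda,\fo{D})$ with $\fo{D}_{ijkl} = \fo{B}_{(ij)(kl)}$, valid for $\lambda > \|\fo{D}\|$ via the Neumann series \eqref{eq:def_K}, giving \eqref{eq:u_invert}. Substituting into the momentum equation produces a single second-order elliptic equation for $\lf{v}$ whose weak form \eqref{eq:weak_form} is tested in $\widehat{H}^1(\Omega)$. Boundedness of the bilinear form $\mathcal{B}$ is immediate; coercivity hinges on three facts: the leading term $\p_j\overline{v}_i \fo{C}_{ijkl}\p_l \lf{v}_k$ controls $\|\nabla \lf{v}\|_{L^2}^2$ via pointwise ellipticity of $\fo{C}$ combined with Korn's inequality on $\widehat{H}^1(\Omega)$, the $\fo{K}$-perturbation is $O(\lambda^{-2})$ in operator norm and hence absorbable for $\lambda$ large, and the mass term $\lambda^2 \int |\lf{v}|^2 dx$ provides $L^2$-coercivity (covering also the case $\p\Omega_1 = \emptyset$ where Korn gives only a seminorm). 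Lax-Milgram then delivers a unique $\lf{v} \in \widehat{H}^1(\Omega)$, and \eqref{eq:u_invert} defines $\lf{\veps} \in H^0(\Omega)$.

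The step I expect to be the most delicate is verifying that this pair $(\lf{\veps},\lf{v})$ genuinely lies in $D(A)$ and not merely in $H^0(\Omega)\times \widehat{H}^1(\Omega)$. Regularity $\p_j(\fo{C}_{ijkl}\lf{\veps}_{kl}) \in L^2(\Omega)$ follows by reading off \eqref{eq:solvability}$_2$ as an equation between $L^2$ functions. The natural boundary condition $\fo{C}_{ijkl}\lf{\veps}_{kl}\, n_j = 0$ on $\p\Omega_2$ must be recovered by reversing the integration by parts in \eqref{eq:weak_form}: since test functions $\overline{v}_i \in \widehat{H}^1(\Omega)$ are free on $\p\Omega_2$, the boundary pairing $\int_{\p\Omega_2} \overline{v}_i\, \fo{C}_{ijkl}\lf{\veps}_{kl}\, n_j\, da$ must vanish identically, which yields the condition in the $H^{-1/2}(\p\Omega_2)$ trace sense (made rigorous by the $L^2$-divergence regularity just established). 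With $(\lf{\veps},\lf{v}) \in D(A)$, the range condition is verified, Lumer-Phillips applies, and the weak-solution conclusion follows as described in the opening paragraph.
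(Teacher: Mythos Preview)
Your proposal is correct and follows essentially the same route as the paper: density of $D(A)$, the dissipativity computation, solvability of \eqref{eq:solvability} by eliminating $\lf{\veps}$ via the resolvent expansion \eqref{eq:def_K}--\eqref{eq:u_invert}, Lax--Milgram applied to the weak form \eqref{eq:weak_form} with coercivity secured by Korn's inequality and the smallness of $\fo{K}$ for large $\lambda$, and finally recovery of the natural boundary condition on $\p\Omega_2$ by reversing the integration by parts. The only addition you make is a slightly more explicit discussion of the weak-solution interpretation for general $U_0 \in H$ by density approximation, which the paper leaves implicit.
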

\begin{corollary}\label{corr}
$(S(t)U_0, \lf{\omega}_{ij})$ yields the unique weak solution of \eqref{eq:lin_sym_sys}, where $\lf{\omega}_{ij}$ is defined via the definite integral in $t$ of the right hand side of \eqref{eq:lin_sym_sys}$_3$ with initial condition $\lf{\omega}_{ij}(0) \in H^0(\Omega)$ at $t = 0$.
\end{corollary}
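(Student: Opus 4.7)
My plan is to exploit the fact that equation \eqref{eq:lin_sym_sys}$_3$ is essentially decoupled from \eqref{eq:lin_sym_sys}$_{1,2}$: its right-hand side depends on $\lf{v}$ and, through $\lf{J}_{[ij]} = \fo{B}_{[ij](kl)}\lf{\veps}_{kl}$, on $\lf{\veps}$, but not on $\lf{\omega}$ itself. Once Theorem 3.1 supplies $S(t)U_0 = (\lf{\veps}(t),\lf{v}(t)) \in C([0,\infty);H)$, the skew part is obtained by a pure time-quadrature, namely the definite integral prescribed in the statement. The proof then reduces to three points: (i) the quadrature is a legitimate object with suitable continuity in $t$; (ii) the resulting triple solves \eqref{eq:lin_sym_sys} in the appropriate weak sense; (iii) uniqueness.

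For (i)--(ii), boundedness of $\fo{B}$ on $H^0(\Omega)$ (which follows from the continuity of $\alpha$ and $\fo{C}$ on $\overline{\Omega}$, already invoked in the proof of Theorem 3.1) gives that $s \mapsto \fo{B}_{[ij](kl)}\lf{\veps}_{kl}(s)$ lies in $C([0,\infty); L^2(\Omega))$, so its time integral is well defined in that space. The gradient contribution $\tfrac{1}{2}(\p_j \lf{v}_i - \p_i \lf{v}_j)$ has only distributional meaning for general $U_0 \in H$, but tested against $\phi \in C_c^\infty(\Omega)$ it equals $-\tfrac{1}{2}\int_\Omega (\p_j \phi\, \lf{v}_i - \p_i \phi\, \lf{v}_j)\,dx$, which is continuous in $t$ since $\lf{v} \in C([0,\infty); L^2(\Omega))$. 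A Fubini argument then lets the time integration commute with the spatial test, verifying that
\[
\lf{\omega}_{ij}(t) = \lf{\omega}_{ij}(0) + \int_0^t \left[\tfrac{1}{2}\bigl(\p_j \lf{v}_i(s) - \p_i \lf{v}_j(s)\bigr) - \fo{B}_{[ij](kl)}\lf{\veps}_{kl}(s)\right]\,ds
\]
is a distribution-valued continuous function of $t$ satisfying \eqref{eq:lin_sym_sys}$_3$ weakly.

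Step (iii) is immediate: if $(\widetilde{\veps}, \widetilde{v}, \widetilde{\omega})$ is any weak solution of \eqref{eq:lin_sym_sys} with the prescribed initial data, then $(\widetilde{\veps}, \widetilde{v})$ solves \eqref{eq:lin_sym_sys}$_{1,2}$ weakly with initial datum $U_0$, so by the uniqueness clause of Theorem 3.1 it coincides with $S(\cdot)U_0$. Equation \eqref{eq:lin_sym_sys}$_3$ together with the common initial value $\lf{\omega}(0)$ then forces $\widetilde{\omega}$ to equal the quadrature above.

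The principal subtlety I anticipate is purely one of formulation. Because Theorem 3.1 provides only $\lf{v}(t) \in L^2(\Omega)$, the skew curl $\p_{[j}\lf{v}_{i]}$ must be handled distributionally throughout, and the statement implicitly broadens $H^0(\Omega)$ to accommodate skew-symmetric matrix fields as data for $\lf{\omega}(0)$. Writing the weak form of \eqref{eq:lin_sym_sys}$_3$ carefully --- tested against $\phi \in C_c^\infty(\Omega)$ in space and then against $\psi \in C_c^\infty([0,\infty))$ in time --- circumvents this without any analytic input beyond Theorem 3.1 and elementary integration.
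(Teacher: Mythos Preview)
Your proposal is correct and follows essentially the same route as the paper: the paper's proof is a one-line sketch that says to substitute $S(t)U_0$ into the right-hand side of \eqref{eq:lin_sym_sys}$_3$, observe that this right-hand side is continuous in $t$, and integrate. You have supplied the details the paper omits, in particular the distributional handling of $\p_{[j}\lf{v}_{i]}$ for general $U_0\in H$ and the uniqueness argument, neither of which the paper spells out.
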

\begin{proof}
Substitute $S(t)U_0 = (\lf{\omega}_{ij}(t), \lf{v}_i(t))$ into the well-defined right hand side of \eqref{eq:lin_sym_sys}$_3$ yielding a continuous in time right hand side.
\end{proof}
\subsection{Time-asymptotic behavior}
Since our system \eqref{eq:lin_sym_sys}$_{1,2}$ is linear, we are able to exploit the special properties of linear contraction semigroup on Hilbert space. As it is not obvious that $(\lambda I - A)^{-1}: H \to H$ is compact, we cannot immediately apply the results of Dafermos and Slemrod \cite{dafermos1973asymptotic} on strong decay in $H$. Instead, we follow an argument of O'Brien \cite{o1978contraction} which in turn was based on the following theorem of S. Foguel \cite{foguel1966ergodic}:
\begin{theorem}
For a $C^0$ semigroup of contractions $S(t)$ on a Hilbert space $H$ define the isometric subspace $H_u$ of $H$ as
\begin{equation*}
    H_u = \left\{ U_1 \in H \ | \ \| S(t) U_1 \|_H = \|U_1 \|_H = \left\| S^*(t) U_1 \right\|_H, t \geq 0\right\}.
\end{equation*}
Then $H_u$ is a closed invariant subspace and $S(t)$ forms a $C^0$ semi-group of unitary operators on $H_u$, and for $W_0$ orthogonal to $H_u$
\begin{equation}
    \label{eq:orth_weak}
    S(t)W_0 = S^*(t)W_0 \rightharpoonup 0
\end{equation}
as $t \to \infty$, where $\rightharpoonup$ denotes weak convergence in $H$.
\end{theorem}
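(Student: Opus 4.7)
The plan is to verify the four assertions (closedness of $H_u$, invariance of $H_u$, unitarity of $S(t)$ on $H_u$, and weak decay on $H_u^\perp$) in sequence, built on a single algebraic reformulation of $H_u$. Because $S(t)$ is a contraction, $I - S^*(t)S(t)$ is bounded and nonnegative; applying the Cauchy--Schwarz inequality to its semi-inner product, $\langle (I - S^*(t)S(t))U_1, U_1\rangle = 0$ forces $S^*(t)S(t)U_1 = U_1$. The analogous statement for $I - S(t)S^*(t)$ gives
\begin{equation*}
H_u = \left\{ U_1 \in H \;\middle|\; S^*(t)S(t)U_1 = U_1 \text{ and } S(t)S^*(t)U_1 = U_1 \text{ for all } t \geq 0 \right\}.
\end{equation*}
Closedness follows at once, since each of $S^*(t)S(t)$ and $S(t)S^*(t)$ is strongly continuous.

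For invariance, fix $U_1 \in H_u$ and set $V = S(t_0)U_1$. The first norm condition, $\|S(t)V\| = \|V\|$, is immediate from the semigroup identity and $\|S(t+t_0)U_1\| = \|U_1\|$. For the second, $\|S^*(t)V\| = \|V\|$, I would split on $t \leq t_0$ versus $t > t_0$: in the first case, write $S(t_0) = S(t)S(t_0 - t)$ and apply the reformulation to collapse $S^*(t)S(t)S(t_0 - t)U_1$ down to $S(t_0 - t)U_1$; in the second, use $S^*(t) = S^*(t - t_0)S^*(t_0)$ together with $S^*(t_0)S(t_0)U_1 = U_1$ to reduce to $S^*(t - t_0)U_1$. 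Both reductions produce vectors of norm $\|U_1\| = \|V\|$. A symmetric computation gives $S^*(t)$-invariance of $H_u$. Unitarity on $H_u$ is then automatic: $S(t)|_{H_u}$ is isometric by construction, and the identity $S(t)S^*(t) = I$ on $H_u$ exhibits $S^*(t)|_{H_u}$ as a right inverse, so $S(t)|_{H_u}$ is surjective.

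For the weak decay statement, the first observation is that $H_u^\perp$ is $S(t)$-invariant: for $W_0 \perp H_u$ and $U_1 \in H_u$, $\langle S(t)W_0, U_1\rangle = \langle W_0, S^*(t)U_1\rangle = 0$ by $S^*(t)$-invariance of $H_u$. The orbit $\{S(t)W_0\}_{t \geq 0}$ therefore remains in the weakly closed set $H_u^\perp$, and $t \mapsto \|S(t)W_0\|$ is bounded and nonincreasing, hence tends to some limit $L \geq 0$. By weak sequential compactness of bounded sets in $H$, every sequence $t_n \to \infty$ admits a subsequence along which $S(t_n)W_0 \rightharpoonup W \in H_u^\perp$. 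If one can establish $W \in H_u$, then $W \in H_u \cap H_u^\perp = \{0\}$, and a standard subsequence argument upgrades this to $S(t)W_0 \rightharpoonup 0$, with the companion statement for $S^*(t)$ following by the $S \leftrightarrow S^*$ symmetry of the hypothesis defining $H_u$.

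The principal obstacle is precisely this identification of the weak limit: weak lower semicontinuity of the norm only yields $\|S(s)W\| \leq \|W\| \leq L$, not the equalities $\|S(s)W\| = \|W\| = \|S^*(s)W\|$ required to place $W$ in $H_u$. The route I would take is to pass through the Sz.--Nagy unitary dilation of the contraction semigroup $S(t)$ to a $C^0$ unitary group $\mathcal{U}(t)$ on an enlarging Hilbert space $K \supset H$ satisfying $S(t) = P_H \mathcal{U}(t)|_H$ for $t \geq 0$; under this dilation, $H_u$ corresponds to those vectors whose full $\mathcal{U}$-orbit remains in $H$, and the weak decay on the complementary subspace is extracted from a Riemann--Lebesgue-type argument applied to the spectral measure of the generator of $\mathcal{U}(t)$, as carried out by Foguel in the discrete setting and adapted to $C^0$ contraction semigroups by O'Brien.
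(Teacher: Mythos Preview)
The paper does not prove this theorem; it is quoted as a result of Foguel \cite{foguel1966ergodic} (with the continuous-time adaptation attributed to O'Brien \cite{o1978contraction}) and then applied as a black box to the operator $A$ of linearized dislocation dynamics. There is therefore no ``paper's own proof'' to compare against.

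That said, your sketch is sound. The algebraic reformulation of $H_u$ via the nonnegative operators $I - S^*(t)S(t)$ and $I - S(t)S^*(t)$ is correct and makes closedness immediate; your invariance argument is complete (the case split on $t \lessgtr t_0$ is the right mechanism, and the reduction $S^*(t)S(t)S(t_0-t)U_1 = S(t_0-t)U_1$ is justified because $\|S(t)S(t_0-t)U_1\| = \|S(t_0)U_1\| = \|U_1\| = \|S(t_0-t)U_1\|$); and unitarity follows as you say. For the weak decay on $H_u^\perp$ you correctly isolate the genuine difficulty---weak lower semicontinuity of the norm does not by itself place the weak cluster point $W$ in $H_u$---and your proposed route through the Sz.--Nagy unitary dilation is exactly the machinery Foguel uses in the discrete case and that O'Brien adapts to $C^0$ semigroups. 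One small remark: the equation $S(t)W_0 = S^*(t)W_0$ in the statement should be read as two separate weak-limit assertions (both $S(t)W_0 \rightharpoonup 0$ and $S^*(t)W_0 \rightharpoonup 0$), not as a pointwise equality of the orbits; your closing sentence handles this correctly via the $S \leftrightarrow S^*$ symmetry of the definition of $H_u$.
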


We apply Foguel's theorem as follows:

Decompose
\begin{equation*}
\begin{aligned}
    & A = A_1 + A_2 \\
    & (A_1 U)_{ij} = \begin{bmatrix}
    \p_{(j} \lf{v}_{i)} \\
    \\
    \frac{1}{\rho} \p_j (\fo{C}_{ijkl} \lf{\veps}_{kl})
    \end{bmatrix}
    \qquad
    (A_2 U)_{ij} = \begin{bmatrix}
     - \fo{B}_{(ij)(kl)} \lf{\veps}_{kl} \\
     \\
     0
     \end{bmatrix}
\end{aligned}
\end{equation*}
with $D(A_1) = D(A)$. Recall from the dissipation inequality \eqref{eq:diss_sign} and \eqref{eq:diss_func} that 
\begin{equation}\label{eq:diss_ta}
    \langle U, A U \rangle = \langle U , A_2 U \rangle = - \| \lf{V} \|^2_{L^2(\Omega)}.
\end{equation}
Now decompose the initial data $U_0 \in H$ as $U_0 = U_1 + W_0$ with $U_1 \in H_u$ and $W_0 \perp H_u$. By Foguel's theorem
\begin{equation}
    \label{eq:lim_U0}
    \left(S(t)U_0 - S(t)U_1 \right) \rightharpoonup 0 \qquad \mbox{as} \ t \to \infty.
\end{equation}
Hence time-asymptotic behavior of $S(t)U_0$ is determined by time-asymptotic behavior of $S(t)U_1$. Furthermore, since $H_u$ is invariant under $S(t)$ and $D(A)$ is dense in $H$, then $H_u \cap D(A)$ is dense in $H_u$. Take $U_1 \in H_u \cap D(A)$ so that $S(t)U_1 \in D(A)$. We know from Foguel's theorem that
\begin{equation}
    \label{eq:rate_StU1}
    \| S(t) U_1 \|^2_H = \| U_1 \|^2_H \qquad \Longrightarrow \qquad \frac{d}{dt} \| S(t) U_1 \|^2_H = 0.
\end{equation}
On the other hand, the dissipation inequality \eqref{eq:diss_ta} gives us
\begin{equation}
    \label{eq:diss_ta_2}
    \frac{d}{dt} \| S(t) U_1 \|^2_H = \langle S(t)U_1, A_2 S(t)U_1 \rangle_H = - \|\lf{V} \|^2_{L^2(\Omega)},
\end{equation}
where
\begin{equation}\label{eq:def_D}
    \lf{V}_s = e_{smn} \alpha_{pn} \fo{C}_{pmkl} \lf{\veps}_{kl} =: \tho{D}_{skl} \lf{\veps}_{kl} = : (\tho{D} U)_s,
\end{equation}
i.e.,
\begin{equation}
    \label{eq:rate_StU1_2}
    \frac{d}{dt} \| S(t) U_1 \|^2_H = - \| \tho{D} S(t) U_1 \|^2_{L^2(\Omega)} \qquad \forall \ t \in \mathbb{R}^+.
\end{equation}
Comparison of \eqref{eq:rate_StU1} and \eqref{eq:rate_StU1_2} gives
\begin{equation}
    \label{eq:diss_StU1}
    \tho{D}S(t)U_1 = 0 \qquad \forall \ t \geq 0.
\end{equation}
But if $\tho{D}S(t)U_1 = 0$ for $ t \geq 0$ then $A_2 S(t) U_1 = 0$ for $t \geq 0$ and our system reduces to
\[
\frac{d}{dt} U = A_1 U,
\]
$U = S(t) U_1$. But $A_1$ generates the unitary semigroup $S_1(t)$, i.e., $S_1(t)U_1$ is the unique weak solution to the system of linear elasticity where $\lf{v}_i$ now plays the role of the displacement. Hence
\begin{equation}
    \label{eq:StU1=S1tU1}
    S(t)U_1 = S_1(t)U_1, \qquad t \geq 0,
\end{equation}
and from \eqref{eq:diss_StU1}
\begin{equation}
    \label{eq:diss_S1tU1}
    DS_1(t)U_1 = 0, \qquad t \geq 0.
\end{equation}
By density of $H_u \cap D(A)$ in $H_u$ \eqref{eq:StU1=S1tU1} and \eqref{eq:diss_S1tU1} hold for all $U_1 \in H_u$. Hence we have proven
\begin{theorem}\label{thm:limit}
$(S(t)U_0 - S_1(t) U_1 ) \rightharpoonup 0$ as $t \to \infty$ when $\tho{D}S_1(t)U_1 = 0$ for $t \geq 0$, i.e. weak solutions of our system \eqref{eq:lin_sym_sys}$_{1,2}$ weakly approach weak solutions of the equations of \textit{linear elasticity} which are \textit{constrained} by 
$e_{sjr} \alpha_{ir} \lf{T}_{ij} = e_{sjr} \alpha_{ir} \fo{C}_{ijkl} \lf{\veps}_{kl} = 0$. The limit system is given by
\begin{equation}\label{eq:limit}
\begin{aligned}
    \p_t \overline{\veps}_{ij} & = \frac{1}{2} (\p_j \overline{v}_i + \p_i \overline{v}_j)\\
    \rho \, \p_t \overline{v}_ i & = \p_j (\fo{C}_{ijkl} \overline{\veps}_{kl})\\
    0  & = e_{smn} \alpha_{pn} \fo{C}_{pmkl} \overline{\veps}_{kl} \ \mbox{in} \ \Omega,\\
    \mbox{and} \quad \overline{v}_i &= 0 \quad \mbox{on} \ \p \Omega_1\\
    \fo{C}_{ijkl} \overline{\veps}_{kl} n_j &= 0 \quad \mbox{on} \ \p \Omega_2.
\end{aligned}
\end{equation}
\end{theorem}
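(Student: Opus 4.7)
The plan is to reduce the asymptotics of $S(t)U_0$ to dynamics on the isometric subspace $H_u$ via Foguel's theorem, and then identify those dynamics explicitly by exploiting the dissipation identity. First I would decompose the initial datum as $U_0 = U_1 + W_0$ with $U_1 \in H_u$ and $W_0 \perp H_u$. Foguel's theorem immediately gives $S(t)W_0 \rightharpoonup 0$, so the entire question is what happens to $S(t)U_1$. The hope is that on $H_u$, the dissipation mechanism must be inert, and this should force $S(t)U_1$ to coincide with a solution of the linear-elasticity semigroup $S_1(t)$ generated by $A_1$.

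To make this precise, I would split $A = A_1 + A_2$ so that $A_2$ collects exactly the $\fo{B}$-term responsible for dissipation, and note from \eqref{eq:diss_ta} that the total dissipation rate equals $\langle U, A_2 U\rangle = -\|\tho{D}U\|_{L^2}^2$, where $\tho{D}$ is the operator in \eqref{eq:def_D}. For $U_1 \in H_u \cap D(A)$, isometry of $S(t)$ on $H_u$ gives $\tfrac{d}{dt}\|S(t)U_1\|_H^2 = 0$, while the dissipation identity, applied to $S(t)U_1 \in D(A)$, gives $\tfrac{d}{dt}\|S(t)U_1\|_H^2 = -\|\tho{D}S(t)U_1\|_{L^2}^2$. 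Comparing the two forces $\tho{D}S(t)U_1 = 0$ for all $t \ge 0$. Since $H_u$ is closed and invariant under $S(t)$, density of $D(A)$ in $H$ transfers to density of $H_u \cap D(A)$ in $H_u$; by continuity of $\tho{D}$ composed with $S(t)$ in the relevant topology, the identity $\tho{D}S(t)U_1 = 0$ then extends to all $U_1 \in H_u$.

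Once $\tho{D}S(t)U_1 \equiv 0$ is established on $H_u$, I would observe that from \eqref{eq:lin_J} and \eqref{eq:def_D} the vanishing of $\tho{D}U$ is equivalent to $\lf{V} = 0$ and hence $\lf{J} = 0$, so that $A_2 S(t)U_1 = 0$. Therefore $S(t)U_1$ satisfies $\tfrac{d}{dt}U = A_1 U$ weakly, and since $A_1$ is the generator of the standard linear-elasticity unitary group $S_1(t)$ (with the same boundary conditions encoded in $D(A) = D(A_1)$), uniqueness of weak solutions yields $S(t)U_1 = S_1(t)U_1$ for all $t \ge 0$. Combining this with $S(t)W_0 \rightharpoonup 0$ from Foguel gives $S(t)U_0 - S_1(t)U_1 \rightharpoonup 0$, with $S_1(t)U_1$ automatically satisfying the constraint $\tho{D}S_1(t)U_1 = 0$, which unpacks precisely to the kinematic constraint $e_{smn}\alpha_{pn}\fo{C}_{pmkl}\overline{\veps}_{kl} = 0$ appearing in \eqref{eq:limit}. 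The boundary conditions on $\overline{v}_i$ and the traction on $\p\Omega_2$ pass to the limit because $S_1(t)U_1 \in D(A_1) = D(A)$ whenever $U_1 \in H_u \cap D(A)$, and by extension for $U_1 \in H_u$ the conditions hold in the appropriate weak trace sense.

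The main obstacle I anticipate is the density step: justifying that $H_u \cap D(A)$ is dense in $H_u$ and that the pointwise-in-time identity $\tho{D}S(t)U_1 = 0$ extends from this dense subspace to all of $H_u$ requires invariance of $H_u$ under $S(t)$ together with strong continuity of $t \mapsto S(t)U_1$ and sufficient boundedness of $\tho{D}$ relative to $S(t)$; this is where Foguel's identification of $H_u$ as a closed $S(t)$-invariant subspace is essential. The remaining verification that the limit equations \eqref{eq:limit} are exactly the weak form satisfied by $S_1(t)U_1$ subject to the induced constraint is then a direct reading off of $A_1$.
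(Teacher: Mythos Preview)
Your proposal is correct and follows essentially the same route as the paper: Foguel's decomposition $U_0 = U_1 + W_0$, the splitting $A = A_1 + A_2$, the comparison of the isometry identity $\tfrac{d}{dt}\|S(t)U_1\|_H^2 = 0$ against the dissipation identity $\tfrac{d}{dt}\|S(t)U_1\|_H^2 = -\|\tho{D}S(t)U_1\|_{L^2}^2$ to force $\tho{D}S(t)U_1 \equiv 0$, the reduction to $S(t)U_1 = S_1(t)U_1$, and the density passage from $H_u \cap D(A)$ to $H_u$. The paper treats the density step with the same brevity you anticipate as an obstacle, simply asserting it from invariance of $H_u$ and density of $D(A)$ in $H$.
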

\begin{corollary} $\p_t \lf{\omega}_{ij}(t) - \frac{1}{2} (\p_j \overline{v}_i - \p_i \overline{v}_j ) (t) \to 0$ as $t \to \infty$ in the sense of distributions, where $\overline{\epsilon}_{ij}, \overline{v}_i$ satisfies \eqref{eq:limit}.
\end{corollary}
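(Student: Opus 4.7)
The plan is to combine Theorem~\ref{thm:limit} with the explicit formula for $\p_t \lf{\omega}_{ij}$ supplied by \eqref{eq:lin_sym_sys}$_3$. Writing $S(t)U_0 = (\lf{\veps}(t), \lf{v}(t))$ and $S_1(t)U_1 = (\overline{\veps}(t), \overline{v}(t))$, Theorem~\ref{thm:limit} delivers $\lf{\veps}(t) - \overline{\veps}(t) \rightharpoonup 0$ in $H^0(\Omega)$ and $\lf{v}(t) - \overline{v}(t) \rightharpoonup 0$ in $L^2(\Omega)$ as $t \to \infty$. Weak convergence in $L^2(\Omega)$ entails convergence in $\mathcal{D}'(\Omega)$, and distributional differentiation is continuous on $\mathcal{D}'(\Omega)$, so $\p_j(\lf{v}_i - \overline{v}_i)(t) \to 0$ in $\mathcal{D}'(\Omega)$ for each $i,j$. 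This takes care of the symmetric-gradient contribution.

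Using \eqref{eq:lin_sym_sys}$_3$ to replace the time derivative, the quantity of interest is
\begin{equation*}
\p_t \lf{\omega}_{ij}(t) - \tfrac{1}{2}\bigl(\p_j \overline{v}_i - \p_i \overline{v}_j\bigr)(t) = \tfrac{1}{2}\bigl(\p_j(\lf{v}_i - \overline{v}_i) - \p_i(\lf{v}_j - \overline{v}_j)\bigr)(t) - \lf{J}_{[ij]}(t),
\end{equation*}
so it remains to check $\lf{J}_{[ij]}(t) \to 0$ in $\mathcal{D}'(\Omega)$. By \eqref{eq:lin_J} and \eqref{eq:def_D}, $\lf{J}_{ij} = e_{sjr}\alpha_{ir}\,\tho{D}_{skl}\lf{\veps}_{kl}$; that is, the operator $\fo{B}$ factors through $\tho{D}$. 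Since $\overline{\veps}(t)$ satisfies the pointwise constraint $\tho{D}\overline{\veps}(t) = 0$ for every $t \geq 0$ by \eqref{eq:limit}, $\fo{B}\,\overline{\veps}(t) \equiv 0$, and hence
\begin{equation*}
\lf{J}_{ij}(t) = \fo{B}_{ij(kl)}\bigl(\lf{\veps}_{kl}(t) - \overline{\veps}_{kl}(t)\bigr).
\end{equation*}
Under the standing continuity assumption on $\alpha$ and $\fo{C}$, $\fo{B}$ is a bounded linear operator on $H^0(\Omega)$ and therefore maps weakly convergent sequences to weakly convergent sequences; hence $\lf{J}(t) \rightharpoonup 0$ in $H^0(\Omega)$, hence in $\mathcal{D}'(\Omega)$, and the skew part inherits this convergence. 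Adding the two contributions delivers the corollary.

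I do not anticipate a genuine obstacle: the decisive structural input is the factorization $\fo{B} = (\text{continuous multiplier})\cdot\tho{D}$, which forces the $\overline{\veps}$-part of $\lf{J}$ to vanish identically, so only the continuity of $\p_j$ on $\mathcal{D}'(\Omega)$ and of $\fo{B}$ on weakly convergent sequences in $H^0(\Omega)$ are needed to conclude.
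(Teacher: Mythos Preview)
Your proposal is correct and follows essentially the same route as the paper: both invoke Theorem~\ref{thm:limit} for the weak convergence of $(\lf{\veps},\lf{v})-(\overline{\veps},\overline{v})$, pass the skew gradient through in the distributional sense, and conclude by showing $\lf{J}_{[ij]}(t)\to 0$. Your justification of the last step via the factorization $\fo{B}=(\text{continuous multiplier})\circ\tho{D}$ together with the constraint $\tho{D}\,\overline{\veps}=0$ simply makes explicit what the paper leaves implicit; the only cosmetic slip is that $\lf{J}$ need not be symmetric, so its weak convergence should be stated in $L^2(\Omega;\fo{R}^{3\times3})$ rather than in $H^0(\Omega)$.
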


\begin{proof}
We know from Theorem \ref{thm:limit} that
\begin{equation*}
    (\lf{\veps}(t), \lf{v}(t)) - (\overline{\veps}(t), \overline{v}(t)) \rightharpoonup 0
\end{equation*}
in $H$ as $t \to \infty$, where $(\overline{\veps}, \overline{v})$ satisfy \eqref{eq:limit}. Hence for all $w \in C^\infty_0(\Omega)$ we have
\begin{equation*}
\begin{aligned}
     \langle \p_{[j} \lf{v}_{i]}(t), w \rangle_{L^2(\Omega)} -  \langle \p_{[j}\overline{v}_{i]}(t), w \rangle_{L^2(\Omega)} & \to 0 \qquad \mbox{as} \ t \to \infty \\
     \langle \lf{J}_{[ij]} (t) , w \rangle_{L^2(\Omega)} & \to 0 \qquad \mbox{as} \ t \to \infty.
\end{aligned}
\end{equation*}
That is 
\begin{equation*}
\begin{aligned}
\p_{[j} \lf{v}_{i]}(t) - \p_{[j} \overline{v}_{i]}(t) & \to 0 \\
\lf{J}_{[ij]}(t) & \to 0  \qquad \mbox{as} \ t \to \infty
\end{aligned}
\end{equation*}
in the sense of distributions.

By Corollary \ref{corr}, \eqref{eq:lin_sym_sys}$_3$ is satisfied in the sense of distributions for $\lf{\omega}(0) \in H^0(\Omega)$. Hence for \eqref{eq:lin_sym_sys}$_3$ we have
\begin{equation*}
    \p_t \lf{\omega}_{ij}(t) - \frac{1}{2} \p_{[j} \overline{v}_{i]}(t) \to 0  \qquad \mbox{as} \ t \to \infty
\end{equation*}
in the sense of distributions, where $(\overline{\veps}_{ij}, \overline{v}_i)$ satisfy \eqref{eq:limit}.
\end{proof}

\subsection{Limit system analysis}
We now provide an analysis of the limit system \eqref{eq:limit} following the argument of Dafermos \cite{dafermos1968existence}.

First assume that the data for the limit system is in $D(A^2_1) = \{(\overline{\veps}_{ij}, \overline{v}_i) \in H | A_1(\overline{\veps}_{ij}, \overline{v}_i) \in H \}$ so that we may differentiate w.r.t $t$ to obtain $\rho \p_{tt} \overline{v}_i = \p_j (\fo{C}_{ijkl} \p_t \overline{\veps}_{kl})$, i.e.,
\begin{equation} \label{eq:lim_sys_anal}
    \rho \p_{tt} \overline{v}_i = \p_j (\fo{C}_{ijkl} \p_l \overline{v}_k)  \qquad \mbox{in} \  \Omega, t \in \mathbb{R}^+, 
\end{equation}
with $\overline{v}_i = 0$ on $\p \Omega_1$, $(\fo{C}_{ijkl} \p_l \overline{v}_k) n_j = 0$ on $\p \Omega_2$ (the restriction on the data is only for temporary convenience since our results for the general data in $H$ follow by density of $D(A^2_1)$ in $H$).

Express solutions of \eqref{eq:lim_sys_anal} as
\begin{equation}
    \label{eq:eig_exp}
    \overline{v}_i = \sum_p c^{(p)} e^{i \lambda^{(p)} t} \phi^{(p)}_i, \qquad c^{(p)} \ \mbox{constants}.
\end{equation}
Then,  \eqref{eq:lim_sys_anal} implies
\begin{equation*}
   \sum_p - \rho {\lambda^{(p)}}^2 c^{(p)} e^{i \lambda^{(p)} t} \phi^{(p)}_i = \sum_p c^{(p)} e^{i \lambda^{(p)} t} \p_j \left(\fo{C}_{ijkl} \p_l \phi_k^{(p)}\right)
\end{equation*}
with $\phi^{(p)}_l = 0$ on $\p \Omega_1$, $(\fo{C}_{ijkl} \p_l \phi_k) n_j = 0$ on $\p \Omega_2$.

Hence, for eigenfunctions $\phi^{(p)}_i$ satisfying
\begin{equation}\label{eq:eig_blm}
    \p_j \left( \fo{C}_{ijkl} \p_l \phi^{(p)}_k \right) = - \rho {\lambda^{(p)}}^2 \phi_i^{(p)} \  \mbox{in} \ \Omega \ \mbox{with} \ \phi^{(p)}_i = 0 \ \mbox{on} \ \p \Omega_1, \  (\fo{C}_{ijkl} \p_l \phi_k) n_j = 0 \ \mbox{on} \ \p \Omega_2,
\end{equation}
\eqref{eq:eig_exp} yields a solution of \eqref{eq:limit}$_{1,2}$. 

The eigenfunctions exist and are orthogonal in $H^0(\Omega)$. Furthermore, we may assume they are normalized so that $\lVert \phi \rVert_{H^0(\Omega)} = 1$. Thus, $c^{(p)}$ may be determined by initial data $\overline{v}_i (0) = \sum_p c^{(p)} \phi^{(p)}_i$, i.e.~ $c^{(p)} = \left( \overline{v}(0), \phi^{(p)} \right)_{H^0(\Omega)}$.

Note that since \eqref{eq:limit}$_3$ holds for all $t \in \mathbb{R}$, we may differentiate w.r.t $t$ to obtain $e_{smn} \alpha_{pn} \fo{C}_{pmkl} \p_t \overline{\veps}_{kl} = 0$ for all $t \in \mathbb{R}$, and hence by \eqref{eq:limit}
\begin{equation}
    \label{eq:vel_constr}
    e_{smn} \alpha_{pn} \fo{C}_{pmkl} \p_l \overline{v}_k = 0.
\end{equation}
Substitute \eqref{eq:eig_exp} into \eqref{eq:vel_constr} to obtain
\begin{equation}\label{eq:vel_constr_eig}
    e_{smn} \alpha_{pn} \fo{C}_{pmkl} \sum_q c^{(q)} e^{i \lambda^{(q)} t} \p_l \phi^{(q)}_k = 0 \  \mbox{in} \ \Omega \ \mbox{for all} \  t \in \mathbb{R}^+.
\end{equation}

We assume there are no repeated eigenvalues for our domain $\Omega$. Then the lhs of \eqref{eq:vel_constr_eig} defines an almost periodic function and we must have
\begin{equation}
    \label{eq:vel_constr_eig_stat}
     e_{smn} \alpha_{pn} \fo{C}_{pmkl} \, c^{(q)} \, \p_l \phi^{(q)}_k = 0 \qquad \mbox{no sum on} \ q.
\end{equation}

Thus there are two cases to consider:

\underline{Case 1}:
\begin{equation*}
    e_{smn} \alpha_{pn} \fo{C}_{pmkl}\, \p_l \phi^{(q)}_k = 0 \qquad \mbox{for some} \ q.
\end{equation*}
In this case $c_q \neq 0$ for this choice of $q$ and the limit solution \eqref{eq:limit} may contain non-trivial oscillations.

\underline{Case 2}:
\begin{equation*}
    e_{smn} \alpha_{pn} \fo{C}_{pmkl}\, \p_l \phi^{(q)}_k \neq 0 \qquad \mbox{for all} \ q.
\end{equation*}
In this case $c_q = 0$ for all $q$ and we have
\begin{equation*}
    \overline{v_i} = 0 \ \mbox{in} \ \Omega \ \mbox{for all} \ t 
\end{equation*}
and hence by \eqref{eq:limit}
\begin{equation*}
    \p_t \overline{\veps}_{ij} = 0 \ \mbox{in} \ \Omega \ \mbox{for all} \ t 
\end{equation*}
i.e.~$\overline{\veps}_{ij}$ is a time-independent equilibrium solution of \eqref{eq:limit} with
\begin{equation}
    \label{eq:stat_case}
    \begin{aligned}
    \p_j (\fo{C}_{ijkl} \overline{\veps}_{kl} ) & = 0 \\
    e_{smn} \alpha_{pn} \fo{C}_{pmkl} \overline{\veps}_{kl} & = 0
    \end{aligned}
\end{equation}
in $\Omega$.
We provide two explicit examples corresponding to Cases 1 and 2.

\noindent \underline{Examples}

Recall that we need the base $\alpha_{ij}$ state to be the curl of a skew-symmetric tensor field to have $T_{ij} = 0$, i.e.~a stress-free base state. Thus, $\alpha_{ij}$ must be of the form
\begin{equation*}
    \alpha_{ij} = e_{jmk} \p_m (e_{ikr} \psi_r) = -(\p_r \psi_r) \delta_{ij} + \p_i \psi_j.
\end{equation*}
We choose the vector field $\psi$ of the form
\begin{equation*}
    \psi_k (x_1, x_2, x_3) = A(x_1, x_2, x_3) \delta_{k1}
\end{equation*}
which yields
\begin{equation*}
    \alpha_{pn} = -(\p_1 A) \delta_{pn} + \p_p A \delta_{n1}.
\end{equation*}
This distribution corresponds to a `crossed-grid' of screw dislocations parallel to the coordinate axes superposed on an edge dislocation distribution with line direction along $x_1$ and Burgers vector in the $x_2-x_3$ plane.

Furthermore, choose the ansatz of uniaxial stress fields
\begin{equation}\label{eq:str_ans}
    \overline{T}_{pm} = \sigma(x_1,t) \delta_{p1} \delta_{m1}
\end{equation}
to note that the r.h.s. of \eqref{eq:limit}$_3$ becomes
\begin{equation*}
    e_{smn}\left( (\p_1 A) \delta_{pn} + (\p_p A) \delta_{n1} \right) \sigma \delta_{p1} \delta_{m1} = 0. 
\end{equation*}
Thus, by our chosen ansatz for $\alpha_{ij}$ and the (time-dependent, for now) limit stress field $\overline{T}_{ij}$, \eqref{eq:limit}$_3$ is satisfied. 

Physically, the screw dislocation distributions along $x_2$ and $x_3$ directions (i.e., $\alpha_{22}, \alpha_{23}$) do not have a Burgers vector favorably aligned to the uniaxial stress field along the $x_1$ direction to produce a non-trivial Peach-Koehler driving force. The edge and screw distributions with line direction in the $x_1$ direction do not lie parallel to a plane on which the uniaxial stress field ansatz $\sigma \delta_{p1} \delta_{m1} = \overline{T}_{pm}$ can produce a traction, and hence these see no driving force for motions as well (the edge distributions with Burgers vector in the $x_2-x_3$ plane share both of these `null' driving force attributes).
\subsubsection{Static solution of limit system}
Consider an isotropic elastic material with $\fo{C}_{ijkl} = \lambda \delta_{ij} \delta_{kl} + \mu (\delta_{ik}\delta_{jl} + \delta_{il}\delta_{jk})$, where $\lambda, \mu$ are the Lam\'e parameters. Consider a time-independent, uniaxial stress field of the form \eqref{eq:str_ans} with $\sigma(x_1)$. Now define $\overline{\veps}_{ij}$ through
\begin{equation*}
    \overline{\veps}_{ij}(x_1,x_2,x_3, t) := \frac{1}{2\mu}\left(\sigma(x_1) \delta_{i1} \delta_{j1} - \frac{\lambda}{3 \lambda + 2 \mu} \sigma(x_1) \delta_{ij} \right)
\end{equation*}
(which can also be expressed as $\fo{S}_{ijkl} \left(\sigma(x_1) \delta_{k1}\delta_{l1}\right)$, where $\fo{S}_{ijkl} = \frac{1}{4 \mu} (\delta_{ik}\delta_{jl} + \delta_{il}\delta_{jk}) -  \frac{\lambda}{2 \mu(3 \lambda + 2 \mu)}\delta_{ij}\delta_{kl}$ is the elastic compliance tensor, the inverse of the elastic stiffness, $\fo{C}_{ijkl}$, on the space of symmetric second-order tensors). Hence, $\fo{C}_{ijkl} \,\overline{\veps}_{kl}(x_1,x_2,x_3, t) = \sigma(x_1) \delta_{i1}\delta_{j1}$ in this case. Making the choice $\sigma(x_1) = \sigma_0$, where $\sigma_0 \in \fo{R}$ is an arbitrary constant, and setting
\begin{equation*}
    \begin{aligned}
    \overline{\veps}_{ij} & = \fo{S}_{ijkl} \sigma_0 \delta_{k1} \delta_{l1} \\
    \overline{v}_i & = 0,
    \end{aligned}
\end{equation*}
we have a non-trivial static solution to the limit system \eqref{eq:limit} with $\p \Omega_2 = \phi$. 
\subsubsection{Oscillating solution of the limit system}
We consider a time-dependent, uniaxial stress field ansatz corresponding to isotropic linear elasticity with shear modulus $\mu$ and the other Lam\'e parameter set to $0$ in terms of the function $U(x_1,t)$ given by
\begin{equation*}
    \overline{T}_{ij} (x_1,t) = \mu ( \p_j U (x_1,t) \delta_{i1} + \p_i U(x_1,t) \delta_{j1} ).
\end{equation*}
Comparison with \eqref{eq:str_ans} yields the definition
\begin{equation*}
    \sigma(x_1,t) := 2 \mu \, \p_1 U(x_1,t).
\end{equation*}
We consider a cylinder with a uniform rectangular cross-section normal to the $x_1$ direction as the body $\Omega$, with free (i.e.traction-free) bounding surfaces perpendicular to $x_2$ and $x_3$. The lateral surfaces of the cylinder correspond to $\p \Omega_2$, and $\p \Omega_1$ comprises the boundaries of the cylinder perpendicular to its axis, $x_1$. Let the $x_1$-coordinate of the planar surfaces comprising $\p \Omega_1$ be $x_l$ and $x_r$. We now set
\begin{equation}\label{eq:osc_ex}
\begin{aligned}
    \overline{v}_i (x_1, x_2, x_3, t) & = \p_t U(x_1,t) \delta_{1i}\\
    \overline{\veps}_{ij} (x_1, x_2, x_3, t) & = \, \p_1 U(x_1, t) \delta_{1i}\delta_{1j},
\end{aligned}
\end{equation}
and require that
\begin{equation*}
\begin{aligned}
    \rho \, \p_t\p_t U &= 2 \mu \, \p_1\p_1 U \quad \mbox{in} \ \Omega \\
    U(x_1,t) & = 0 \Rightarrow \p_t U(x_1,t) = 0 \quad \mbox{for} \  x_1 = x_l, x_r.
\end{aligned}
\end{equation*}
Non-trivial solutions to this wave equation exist and define oscillating solutions to the limit system \eqref{eq:limit} through the definitions \eqref{eq:osc_ex} (with $\fo{C}_{ijkl} \veps_{kl} = \overline{T}_{ij}(x_1,t)$).

In summary, our limit system analysis  yields the following consequence of Theorem \ref{thm:limit}:
\begin{corollary}
Assume \eqref{eq:eig_blm} has no repeated eigenvalues. For initial data $U_0 \in H$ the limit system solution $S(t)U_1$ must lie in either Case 1: non-trivial oscillations or Case 2: a non-trivial solution $\overline{\veps}$ to \eqref{eq:stat_case} representing a static solution.
\end{corollary}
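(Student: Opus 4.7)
The corollary is essentially a packaging of the case analysis carried out in the preceding subsection, so my plan is to organize that analysis and extend it from smooth data to all of $H$. First, I would invoke Theorem \ref{thm:limit} to reduce the asymptotic question for $S(t)U_0$ to analysis of $S_1(t)U_1$, where $U_1$ is the projection of $U_0$ onto the isometric subspace $H_u$. To enable the eigenfunction expansion \eqref{eq:eig_exp}, I would initially restrict to data $U_1 \in H_u \cap D(A_1^2)$ (density in $H_u$ is used later) and write $S_1(t)U_1$ in the orthonormal eigenbasis $\{\phi^{(p)}\}$ of \eqref{eq:eig_blm} with coefficients $c^{(p)} = \langle \overline{v}(0), \phi^{(p)}\rangle_{H^0(\Omega)}$.

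The key step is then to feed \eqref{eq:eig_exp} into the velocity-level constraint \eqref{eq:vel_constr} (obtained by differentiating \eqref{eq:limit}$_3$ in $t$ and applying \eqref{eq:limit}$_{1,2}$) to produce \eqref{eq:vel_constr_eig}. Here the distinctness of eigenvalues is used decisively: the left-hand side is an $L^2(\Omega)$-valued trigonometric polynomial with mutually distinct exponents, hence an almost periodic function of $t$ in the sense of Bohr. Orthogonality of distinct pure frequencies under the Bohr mean then forces each frequency mode to vanish separately, producing the per-mode constraint \eqref{eq:vel_constr_eig_stat}.

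Partition the eigenindices into $I_{\mathrm{osc}} = \{q : e_{smn}\alpha_{pn}\fo{C}_{pmkl}\p_l\phi^{(q)}_k = 0\}$ and its complement $I_{\mathrm{stat}}$. The per-mode constraint forces $c^{(q)} = 0$ for every $q \in I_{\mathrm{stat}}$, so $\overline{v}_i = \sum_{q \in I_{\mathrm{osc}}} c^{(q)} e^{i\lambda^{(q)} t} \phi^{(q)}_i$, a sum purely over modes preserving the dislocation constraint. If any $c^{(q)} \neq 0$ with $q \in I_{\mathrm{osc}}$, the limit solution is a non-trivial superposition of harmonic modes of the constrained elastic operator, which is Case~1. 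If instead every $c^{(q)}$ vanishes, then $\overline{v}_i \equiv 0$ and \eqref{eq:limit}$_{1}$ gives $\p_t \overline{\veps}_{ij} = 0$, so $\overline{\veps}_{ij}(0)$ itself is a time-independent solution satisfying \eqref{eq:stat_case}; non-triviality of the limit coincides with non-triviality of $\overline{\veps}_{ij}(0)$, yielding Case~2.

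Finally, since $D(A_1^2)$ is dense in $H$, $H_u$ is closed and invariant under $S_1(t)$, and $S_1(t)$ is strongly continuous and unitary on $H_u$, the mode-by-mode conclusions pass to general $U_1 \in H_u$ by approximation, completing the dichotomy. The main technical point is really the almost-periodicity argument justifying \eqref{eq:vel_constr_eig_stat} from \eqref{eq:vel_constr_eig}; once the eigenvalues are assumed distinct, this reduces to Parseval-type orthogonality for Bohr series, and everything else is bookkeeping around the expansion \eqref{eq:eig_exp}.
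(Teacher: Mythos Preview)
Your proposal is correct and follows essentially the same route as the paper: the corollary is presented there merely as a summary of the preceding limit-system analysis, and you reproduce that analysis step by step (restriction to $D(A_1^2)$, the eigenfunction expansion \eqref{eq:eig_exp}, the almost-periodicity argument under the distinct-eigenvalue hypothesis to isolate modes, and the resulting Case~1/Case~2 split), together with the density extension to all of $H_u$. Your partition of indices into $I_{\mathrm{osc}}$ and $I_{\mathrm{stat}}$ yields a slightly cleaner, data-dependent dichotomy than the paper's phrasing (which states the split as a property of the eigenfunctions themselves), but the underlying argument is identical.
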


\section*{Acknowledgment}
The work of AA was supported by the grant NSF OIA-DMR \#2021019.
\bibliographystyle{alpha}\bibliography{ref}

\begin{thebibliography}{ZAWB15}

\bibitem[AA19]{arora_acharya_ijss}
Rajat Arora and Amit Acharya.
\newblock Dislocation pattern formation in finite deformation crystal
  plasticity.
\newblock {\em International Journal of Solids and Structures},
  184(2):114--135, 2020, electronically published Feb. 26, 2019.

\bibitem[AA20]{arora2020unification}
Rajat Arora and Amit Acharya.
\newblock A unification of finite deformation $j_2$ {V}on-{M}ises plasticity
  and quantitative dislocation mechanics.
\newblock {\em Journal of the Mechanics and Physics of Solids}, 143:104050,
  2020.

\bibitem[Ach22]{acharya2022action}
Amit Acharya.
\newblock An action for nonlinear dislocation dynamics.
\newblock {\em Journal of the Mechanics and Physics of Solids}, 161:104811,
  2022.

\bibitem[AKS19]{acharya2019structure}
Amit Acharya, Robin~J. Knops, and Jeyabal Sivaloganathan.
\newblock On the structure of linear dislocation field theory.
\newblock {\em Journal of the Mechanics and Physics of Solids}, 130:216--244,
  2019.

\bibitem[Cia10]{ciarlet2010korn}
P.~G. Ciarlet.
\newblock On {K}orn’s inequality.
\newblock {\em Chinese Annals of Mathematics, Series B}, 31(5):607--618, 2010.

\bibitem[Daf68]{dafermos1968existence}
Constantine~M. Dafermos.
\newblock On the existence and the asymptotic stability of solutions to the
  equations of linear thermoelasticity.
\newblock {\em Archive for Rational Mechanics and Analysis}, 29(4):241--271,
  1968.

\bibitem[DS73]{dafermos1973asymptotic}
Constantine~M. Dafermos and Marshall Slemrod.
\newblock Asymptotic behavior of nonlinear contraction semigroups.
\newblock {\em Journal of Functional Analysis}, 13(1):97--106, 1973.

\bibitem[Fog66]{foguel1966ergodic}
S.~R. Foguel.
\newblock The ergodic theorem for markov processes.
\newblock {\em Israel Journal of Mathematics}, 4(1):11--22, 1966.

\bibitem[O'B78]{o1978contraction}
R.~E. O'Brien.
\newblock Contraction semigroups, stabilization, and the mean ergodic theorem.
\newblock {\em Proceedings of the American Mathematical Society}, 71(1):89--94,
  1978.

\bibitem[Yos71]{yosida}
K.~Yosida.
\newblock {\em Functional analysis, {T}hird {E}dition}.
\newblock Springer-Verlag, New York 1971, 1971.

\bibitem[ZAWB15]{zhang2015single}
Xiaohan Zhang, Amit Acharya, Noel~J. Walkington, and Jacobo Bielak.
\newblock A single theory for some quasi-static, supersonic, atomic, and
  tectonic scale applications of dislocations.
\newblock {\em Journal of the Mechanics and Physics of Solids}, 84:145--195,
  2015.

\end{thebibliography}
\end{document}